\newtheorem{theorem}{Theorem}[section]
\newtheorem{lemma}[theorem]{Lemma}
\theoremstyle{remark}
\newcommand{\uno}{\textbf{1}}
\def\QQ{{\mathbb{Q}}}
\def\CC{{\mathbb{C}}}
\def\lx{\mathcal{B}(X)}
\def\ly{\mathcal{B}(Y)}
\def\lxy{\mathcal{B}(X,Y)}
\def\lyx{\mathcal{B}(Y,X)}
\def\lh{\mathcal{B}(H)}
\def\ker{{\rm Ker}}
\def\m{{\rm m}}
\def\q{{\rm q}}
\def\M{{\rm M}}
\def\c{{\rm c}}
\def\ran{{\rm ran}}
\def\dist{\rm dist}
\def\r{{\rm r}}
\title[The reduced minimum modulus additive preservers]{Additive maps preserving the reduced minimum
 modulus of Banach\\
  space operators}
\author{Abdellatif Bourhim}
\address{D\'epartement de math\'ematiques et de statistique, Universit\'e Laval, Qu\'ebec, Canada}
\address{\textit{Current address}: Department of Mathematics, Syracuse University, 215 Carnegie Building,
Syracuse, New York 13244}
\email{bourhim@mat.ulaval.ca \& abourhim@syr.edu}
\keywords{Linear and additive preservers, reduced minimum modulus, minimum modulus, surjectivity modulus}
\subjclass[2000]{Primary 47B49; Secondary 47B48, 46A05, 47A10}
\thanks{The author was supported by an adjunct professorship at Laval university.}
\begin{document}
\begin{abstract}Let  $\lx$ be the algebra of all bounded linear operators on an infinite dimensional complex Banach space $X$. We prove that 
an additive surjective map $\varphi$ on $\lx$ preserves the reduced minimum modulus  if and only if either
there are bijective isometries $U:X\to X$ and $V:X\to X$ both linear or both conjugate linear such that $\varphi(T)=UTV$ for all 
$T\in\lx$, or  $X$ is reflexive and there are bijective isometries $U:X^*\to X$ and $V:X\to X^*$ both linear or both conjugate linear such that 
$\varphi(T)=UT^*V$ for all $T\in\lx$. As immediate consequences of the ingredients used in the proof of this result, we get the complete description 
of surjective additive maps preserving  the minimum, the surjectivity and the maximum moduli of Banach space operators.  
\end{abstract}
\maketitle

\vspace*{-0.7cm}

\section{Introduction}

Several results on linear preservers have been extended to the setting of additive preservers, and, in many cases, their extensions
demonstrated to be nontrivial as the forms of additive preservers are some time not  ``nice" as 
the ones of the corresponding linear preservers. In \cite{OSP}, Omladi\v c and \v Semrl characterized surjective additive maps preserving the spectrum
of bounded linear operators on complex Banach spaces and showed that such maps are of standard forms. This is an extension of the result due to 
Jafarian and Sourour \cite{JaSo86} that describes linear spectrum-preserving maps. 
In \cite{ZFZH},  Bai and Hou considered a more general situation  and characterized surjective additive maps preserving the spectral radius of Banach space operators, extending the result due to Bre\v sar and \v Semrl \cite{BreSem96} from the linear setting. For further results on additive preserver problems, we refer the interested reader, for example, to \cite{ZFZH, bai-hou, bourhim, cui, hoi-Meiyan, hou, Cui-Hou, OS, OSP, zhao-hou} and the references therein.

Recently, Mbekhta described unital surjective linear maps on $\lh$, the algebra  of all bounded linear operators on an infinite dimensional 
complex Hilbert space $H$, preserving several spectral quantities such as the minimum, the surjectivity and the reduced minimum moduli; see \cite{Mb, Mb3}.
In \cite{Mb3}, he showed that a unital surjective linear map on $\lh$ preserves the reduced minimum modulus if and only if it is an isometry and conjectured that the 
same result remains true for the nonunital linear case. Mbekhta's articles \cite{Mb} and \cite{Mb3}, which were followed quickly by several papers treating related problems,  contain several good ideas and results which 
opened the way for certain authors to consider more general situations. His results were extended to a more general setting by characterizing (not necessarily unital) surjective linear maps between $C^*$-algebras
preserving the minimum, surjectivity, maximum, and reduced minimum moduli and his conjecture was positively settled; see \cite{BBS}. 
In \cite{skhiri}, Skhiri generalized Mbekhta's result by characterizing surjective linear maps on $\lx$, the algebra  of all bounded linear operators on an infinite dimensional complex Banach
space $X$, preserving the reduced minimum modulus. As the main result of \cite{skhiri}, he established the following theorem.
\begin{theorem}
A surjective linear map $\varphi$ from $\lx$ onto itself for which $\varphi(\uno)$ is invertible preserves the
reduced minimum modulus if and only if it is either an isometric automorphism or isometric antiautomorphism multiplied by a bijective isometry in $\lx$.
\end{theorem}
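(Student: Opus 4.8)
The plan is to route the argument through the rank-one operators: first show that $\varphi$ carries the set of rank-one operators onto itself, then invoke the known form of linear rank-one preservers, and finally read off the isometries from the values of $\gamma$ on rank-one operators. The starting point is a few elementary facts that make $\gamma$ accessible from the algebraic structure: a rank-one operator $u\otimes f$ (acting as $x\mapsto f(x)u$) satisfies $\gamma(u\otimes f)=\|u\|\,\|f\|=\|u\otimes f\|$, so on rank-one operators $\gamma$ is just the operator norm; $\gamma(\lambda\uno)=|\lambda|$; $\gamma(T)>0$ exactly when $\ran(T)$ is closed; $\gamma(T)=\gamma(T^{*})$; for bijective isometries $U,V$ one has $\gamma(UTV)=\gamma(T)$; and $U(u\otimes f)V=(Uu)\otimes(V^{*}f)$. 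Next I would check that $\varphi$ is injective: if $\varphi(T_{0})=0$ then $\gamma(S+T_{0})=\gamma(\varphi(S)+\varphi(T_{0}))=\gamma(\varphi(S))=\gamma(S)$ for every $S\in\lx$, and testing this against $S=\lambda\uno$ together with suitable rank-one operators, using the extremal description of $\gamma$, forces $T_{0}=0$. So $\varphi$ is a linear bijection, and putting $S=\varphi(T)$ in the hypothesis shows $\varphi^{-1}$ preserves $\gamma$ as well.

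The core of the proof --- and the step I expect to be the main obstacle --- is an intrinsic characterisation of the rank-one operators of $\lx$ in terms of $\gamma$, the vector-space operations, and one fixed invertible reference element: $\uno$ on the source side and $\varphi(\uno)$ on the target side. This is exactly the place where the hypothesis ``$\varphi(\uno)$ invertible'' is used, since it is what allows $\varphi(\uno)$ to play the role of a reference invertible element. The mechanism rests on the equality of $\gamma$ with the operator norm on rank-one operators together with finer two-sided estimates comparing $\gamma(F)$, $\gamma(F+\lambda\uno)$ and $\gamma$ of small perturbations of $F$, which together single out the rank-one operators among all of $\lx$. Because $\varphi$ and $\varphi^{-1}$ are linear and $\gamma$-preserving, such a characterisation is transported in both directions, so $\varphi$ restricts to a bijection of the set of rank-one operators. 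The standard description of bijective linear rank-one preservers on $\lx$ then gives two possibilities: either $\varphi(T)=ATB$ for all $T\in\lx$ with $A,B\in\lx$ bijective, or $\varphi(T)=AT^{*}B$ for all $T\in\lx$ with $A\colon X^{*}\to X$ and $B\colon X\to X^{*}$ bijective; in the latter case the surjectivity of $\varphi$ forces every operator on $X^{*}$ to be an adjoint, hence $X$ is reflexive.

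It remains to pin down the isometries. In the first case, applying $\gamma(\varphi(T))=\gamma(T)$ to $T=u\otimes f$ gives $\|Au\|\,\|B^{*}f\|=\|u\|\,\|f\|$ for all $u\in X$ and $f\in X^{*}$; hence $u\mapsto\|Au\|/\|u\|$ is a positive constant $c$, while $f\mapsto\|B^{*}f\|/\|f\|$ equals $c^{-1}$, so $A=cU$ and $B=c^{-1}V$ with $U$ and $V$ bijective isometries (using that $B^{*}$ being a scalar multiple of a surjective isometry forces the same of $B$), and therefore $\varphi(T)=UTV$ --- an isometric automorphism of $\lx$ multiplied by a bijective isometry of $X$. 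The second case is handled in the same way, using $(u\otimes f)^{*}=f\otimes\hat u$ and the reflexivity of $X$, and yields $\varphi(T)=UT^{*}V$ with $U\colon X^{*}\to X$ and $V\colon X\to X^{*}$ bijective isometries, i.e.\ an isometric antiautomorphism multiplied by a bijective isometry. The converse inclusions are routine: any map of either form is linear and surjective, and $\gamma(UTV)=\gamma(T)$ (respectively $\gamma(UT^{*}V)=\gamma(T^{*})=\gamma(T)$) for bijective isometries $U,V$, so it preserves $\gamma$; this completes the proof.
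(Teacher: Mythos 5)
Your overall architecture (show $\varphi$ is a bijective rank-one preserver, quote the structure theorem for additive/linear rank-one preservers, then evaluate $\gamma$ on rank-one operators to extract the isometries) is reasonable, and your closing step is correct and arguably cleaner than the paper's: the identity $\|Au\|\,\|B^{*}f\|=\|u\|\,\|f\|$ for all $u,f$ does force $A=cU$ and $B=c^{-1}V$ with $U,V$ isometries, whereas the paper instead evaluates $\gamma(T^{-1})=\|T\|^{-1}$ on invertible $T$ and proves two separate norm lemmas. But the heart of your argument --- the ``intrinsic characterisation of the rank-one operators in terms of $\gamma$, the vector-space operations, and a reference invertible element'' --- is only announced, never supplied, and you yourself flag it as the main obstacle. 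As written this is a genuine gap: it is far from clear that comparing $\gamma(F)$, $\gamma(F+\lambda\uno)$ and $\gamma$ of small perturbations singles out the rank-one operators, because $\gamma$ is badly discontinuous (the paper's own example $\gamma(\mathrm{diag}(1,1/n))=1/n\not\to 1=\gamma(\mathrm{diag}(1,0))$) and takes identical values on operators of very different rank. The paper never attempts such a characterisation. Instead it proves something weaker and more robust: two-sided bounds on $\gamma$ force $\varphi$ to preserve the set $\{T:\M(T)>0\}$ of operators that are bounded below or surjective, via the perturbation $R_{0}+r\,x\otimes f$ with $x\notin\ran(R_{0})$ and $y\in\ker(R_{0})$, the inequality $\|Ty\|\geq\gamma(T)\,{\rm dist}(y,\ker(T))$, the openness of $\{\M>0\}$ and the continuity of $\M$; the rank-one preserving property is then obtained only indirectly, from the Cui--Hou machinery for additive maps preserving invertibility/zero divisors. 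If you keep your route, you must actually state and prove the $\gamma$-characterisation of rank-one operators; as it stands the proof has a hole exactly where you predicted one.

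A secondary gap: your injectivity argument is too thin. From $\gamma(S+T_{0})=\gamma(S)$ for all $S$, testing against $S=\lambda\uno$ yields only $\sigma_{g}(T_{0})\subseteq\{0\}$, i.e.\ that $T_{0}$ is quasinilpotent, which does not force $T_{0}=0$. The paper deduces $\sigma_{g}(T+T_{0})=\sigma_{g}(T)$ for \emph{every} $T$, hence $\r(T+T_{0})=\r(T)$ for every $T$ (the generalized spectrum contains the boundary of the spectrum), and then invokes Zem\'anek's spectral characterisation of the radical to conclude $T_{0}=0$. Some such global argument is needed; ``suitable rank-one operators'' is not enough as written.
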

This result has been also proved in \cite[Theorem 7.1]{BBS} for the Hilbert space operators case but without the extra condition that $\varphi(\uno)$
is invertible. In fact, much more has been established in \cite{BBS} where it is shown that a surjective linear map $\varphi$ between $C^*$-algebras 
preserves the reduced minimum modulus if and only if it is a selfadjoint Jordan isomorphism multiplied by a unitary element. This result clearly shows that
the condition that $\varphi(\uno)$ is invertible in the above theorem is superfluous even for the more general setting of the reduced minimum modulus preservers 
between $C^*$-algebras. 

In this paper, we completely describe additive surjective maps preserving the reduced minimum modulus of Banach space operators. The obtained
result, which extends \cite[Theorem 7.1]{BBS}, improves Skhiri's result and shows that the condition that  $\varphi(\uno)$ is invertible in the above theorem is also superfluous even for the Banach space operators case. Our proof
is simple and  self-contained and also works to recapture and extend, to Banach space operators case, the recent results from \cite{bendaoud, BB1, Mb}
which describe linear and additive maps preserving the minimum modulus, the surjectivity modulus, and the maximum modulus of Hilbert space operators.
Unlike in \cite{bendaoud}, we avoid using several deep results such as Herstein theorem's \cite{Her56} and the celebrate Theorem of Kadison \cite{Ka51}.

\textit{Acknowledgements.} Some arguments of certain proofs presented herein are influenced by several ideas which the author 
shared with his colleagues M. Burgos and V.S. Shulman during the preparation of their joint paper \cite{BBS}. He wish to express his thanks 
to them for the enjoyable collaboration and for their remarks and comments.

\section{Main results}
Throughout this paper, $X$ and $Y$ denote infinite dimensional
complex Banach spaces, and $\lxy$ denotes the space of all 
bounded linear maps from $X$ into $Y$. As usual, when $X=Y$, we simply write $\lx$ instead of ${\mathcal B}(X,X)$. The \emph{reduced minimum modulus} of a map $T\in\lxy$
is defined by 
$$\gamma(T):=\left\{
\begin{array}{ll}
\inf\{\| Tx\| \colon {\dist}(x, \ker(T))\geq 1\}&\text{if }T\not=0\\
\infty&\text{if }T=0.
\end{array}
\right.$$
The reduced minimum modulus measures the closedness of the range of operators in the sense that  $\gamma(T)$ is positive precisely when $T$ has a closed range; see for instance \cite[II.10]{Mu}. 
Recall also that the \emph{minimum modulus} and the \emph{surjectivity modulus} of $T$  are defined respectively by 
$$\m(T):=\inf \{\, \|Tx\|\colon x\in
X,~\|x\|=1\, \}\text{ and }{\q}(T):=\sup\{\, \varepsilon\geq 0 \colon \varepsilon B_Y \subseteq T(B_X)\,\},$$  where  $B_X$ denotes the closed unit ball of $X$. 
Note that $\m(T)>0$ if and only if $T$ is injective and has closed range, and
that $\q(T)>0$ if and only if $T$ is surjective. While, the \emph{ maximum modulus} of $T$ is defined by $\M(T):=\max(\m(T),\q(T))$. It is easy to see that $\M(T)\leq \gamma(T)$ and that 
$\M(T)=\M(T^*)=\gamma(T)=\gamma(T^*)$ provided that $\M(T)>0$, where $T^*:Y^*\to X^*$ is the adjoint of $T$ acting between the dual spaces of $Y$ and $X$. Moreover, if $T$ is a bijective map, then
\begin{equation}\label{invertible}
\gamma(T)=\m(T)=\q(T)=\M(T)=\|T^{-1}\|^{-1}.
\end{equation}
Note that it follows from the above definitions that the spectral functions $\m(.)$ and $\q(.)$ are contractive, and thus $\M(.)$ is a continuous function. But, unlike these, the spectral function $\gamma(.)$ is not continuous
as the following simple example shows:
$$\gamma\left(\left[
\begin{array}{cc}
1&0\\
0&1/n
\end{array}
\right]\right)=1/n\to0\not=1=\gamma\left(\left[
\begin{array}{cc}
1&0\\
0&0
\end{array}
\right]\right)\text{ as }n\to\infty.$$

Several spectra can be described in term of the above spectral quantities. The \emph{generalized spectrum} of an operator $T\in\lx$ is 
$\sigma_g(T):=\{\lambda\in\CC:\lim_{z\to\lambda}\gamma(T-z)=0\},$ and the surjectivity spectrum and the approximate point spectrum of $T$ are given by $\sigma_{su}(T):=\{\lambda\in\CC:\q(T-\lambda)=0\}$ and $\sigma_{ap}(T):=\{\lambda\in\CC:\m(T-\lambda)=0\}.$ 
All these are closed subsets of  $\sigma(T)$, the spectrum of $T$, and contain the boundary of $\sigma(T)$; see for instance \cite{Mu}.  In particular, the spectral radius, $\r(T)$, of $T$ coincides with the maximum modulus of each of the previous mentioned spectra. Thus, applying \cite[Theorem 3.2]{ZFZH} or \cite[Theorem 1]{BreSem96}, one immediately gets 
the complete description of additive or linear surjective maps $\varphi$ on $\lx$ preserving any one of the above spectra.

Now, we are ready to state and prove a more general result than the promised one. Its proof depends on some arguments quoted from \cite[Proof of Theorem 7.2]{BBS}. Given $x\in X$ and $f\in X^*$,
we write $\langle x,f\rangle$ instead of $f(x)$ and $x\otimes f$ for the rank one  operator defined by $x\otimes f(y):=\langle y,f\rangle x,~(y\in X)$.

\begin{theorem}\label{ineq-gamma}
For an additive  surjective map $\varphi: \lx \to \lx$, there are $\alpha,~\beta>0$ such that $\beta\gamma(T)\leq \gamma(\varphi(T))\leq\alpha\gamma(T)$ for all $T\in\lx$ if and only if 
either there are bijective continuous mappings $A:X\to X$ and $B:X\to X$ both linear or both conjugate linear such that $\varphi(T)=ATB$ for all $T\in\lx$, or  
there are bijective continuous mappings $A:X^*\to X$ and $B:X\to X^*$ both linear or both conjugate linear such that $\varphi(T)=AT^*B$ for all $T\in\lx$. The last case may occur only if $X$ is reflexive.
\end{theorem}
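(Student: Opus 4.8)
The forward (``if'') implication is a routine estimate. If $\varphi(T)=ATB$ with $A,B$ bounded bijections (linear, or both conjugate linear, which obey the same norm inequalities), then $\ker(ATB)=B^{-1}(\ker T)$ and $\ran(ATB)=A(\ran T)$, so $ATB$ has closed range precisely when $T$ does; for such $T$, combining $\|A^{-1}\|^{-1}\|z\|\leq\|Az\|\leq\|A\|\|z\|$, the estimate $\|B^{-1}\|^{-1}{\dist}(x,B^{-1}\ker T)\leq{\dist}(Bx,\ker T)\leq\|B\|{\dist}(x,B^{-1}\ker T)$, and $\|Tz\|\geq\gamma(T){\dist}(z,\ker T)$ yields $\|A^{-1}\|^{-1}\|B^{-1}\|^{-1}\gamma(T)\leq\gamma(ATB)\leq\|A\|\|B\|\gamma(T)$. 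For $\varphi(T)=AT^{*}B$ one first uses $\gamma(T)=\gamma(T^{*})$ and the fact that reflexivity of $X$ makes $T\mapsto T^{*}$ a bijection of $\lx$ onto $\mathcal B(X^{*})$. So $\beta:=\|A^{-1}\|^{-1}\|B^{-1}\|^{-1}$ and $\alpha:=\|A\|\|B\|$ work.

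For the converse, first note that $\varphi$ is injective, since $\varphi(T)=0$ forces $\infty=\gamma(\varphi(T))\leq\alpha\gamma(T)$, hence $T=0$; so $\varphi$ is bijective and $\varphi^{-1}$ satisfies $\alpha^{-1}\gamma(S)\leq\gamma(\varphi^{-1}(S))\leq\beta^{-1}\gamma(S)$, whence both $\varphi$ and $\varphi^{-1}$ fix $0$ and carry closed-range operators to closed-range operators. The crucial step is that $\varphi$ and $\varphi^{-1}$ preserve the rank-one operators, and for this the plan is to establish a characterization of rank-one operators intrinsic to $\gamma$ and the linear structure: using $\gamma(x\otimes f)=\|x\|\|f\|$, one shows that a nonzero $F$ has rank one if and only if for every $S\in\lx$ the function $\lambda\mapsto\gamma(S+\lambda F)$ has at most one point of discontinuity in $\CC$ — indeed, if $S$ has closed range then all $S+\lambda F$ do, and a discontinuity can occur only at a $\lambda$ at which the pencil loses invertibility with nearby resolvents blowing up, of which there is at most one, while if $S$ has non-closed range the pencil has non-closed range throughout, so $\gamma(S+\lambda F)\equiv 0$; conversely, for $F$ of rank $\geq 2$ one picks an invertible $S$ for which $S^{-1}F$ has two prescribed distinct nonzero eigenvalues, forcing two discontinuities, which can moreover be placed at rational values of $\lambda$ so that the whole comparison goes through along the subpencil $\{S+\lambda F:\lambda\in\QQ\}$, on which $\varphi$ acts as $S+\lambda F\mapsto\varphi(S)+\lambda\varphi(F)$. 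Making this characterization genuinely $\gamma$-intrinsic — robust under the two-sided estimate rather than exact preservation, and uniform over non-invertible $S$ — is, I expect, the main obstacle, and the place where the ideas drawn from \cite[Proof of Theorem 7.2]{BBS} enter.

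Granting rank-one preservation, $(x,f)\mapsto\varphi(x\otimes f)$ is additive in each variable (from $x\otimes(f+g)=x\otimes f+x\otimes g$ and its analogue) and bijective on rank-one operators, so by the standard analysis of such maps there are either bijective additive $A:X\to X$, $C:X^{*}\to X^{*}$ with $\varphi(x\otimes f)=Ax\otimes Cf$, or bijective additive $A:X^{*}\to X$, $C:X\to X^{*}$ with $\varphi(x\otimes f)=Af\otimes Cx$, and in either case $A,C$ are $\tau$-semilinear for a common field automorphism $\tau$ of $\CC$ (from $\varphi(\lambda x\otimes f)=\varphi(x\otimes\lambda f)$). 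The two-sided bound now reads $\beta\|x\|\|f\|\leq\|Ax\|\|Cf\|\leq\alpha\|x\|\|f\|$; fixing $f$, then $x$, forces $A$ and $C$ to be bounded with bounded inverses, so $\tau$ is continuous, i.e.\ $\tau=\mathrm{id}$ or complex conjugation, and $A,C$ are bounded linear or conjugate-linear bijections; in particular $\varphi$ is linear (or conjugate linear) and affine along the pencils $T_{0}+\CC\cdot(x\otimes f)$.

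It remains to extend the formula from rank-one operators to all of $\lx$. The plan is to run the pencil argument once more, with $T_{0}$ invertible and a rank-one $x\otimes f$: matching the unique discontinuity of $\lambda\mapsto\gamma(T_{0}+\lambda x\otimes f)$, located at $-\langle T_{0}^{-1}x,f\rangle^{-1}$, with that of the image pencil yields, in the first case, $\langle T_{0}^{-1}x,f\rangle=\langle\varphi(T_{0})^{-1}Ax,Cf\rangle$ for all $x,f$; reading this as an identity of functionals on $X^{*}$ shows $C^{*}J_{X}(X)\subseteq J_{X}(X)$ (where $J_{X}:X\to X^{**}$ is the canonical embedding), so $B:=J_{X}^{-1}C^{*}J_{X}$ is a well-defined bounded bijection of $X$ with $\varphi(T_{0})=AT_{0}B$, and since every element of $\lx$ is a sum of two invertibles, additivity gives $\varphi(T)=ATB$ for all $T$. (One checks along the way, using the same discontinuity analysis applied to $\varphi$ and to $\varphi^{-1}$, that $\varphi(T_{0})$ is invertible whenever $T_{0}$ is, so that the above makes sense.) In the second case the analogous identity forces $A:X^{*}\to X$ to be weak${}^{*}$-to-weak continuous, so, being a bounded bijection, it maps the weak${}^{*}$-compact ball $B_{X^{*}}$ onto a weakly compact set containing a ball of $X$; hence $X$ is reflexive, and $\varphi(T)=AT^{*}B$ with $B:X\to X^{*}$ given by $(By)(x):=(Cx)(y)$. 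Finally, when $X$ is not reflexive this second alternative cannot occur at all, since $\{AT^{*}B:T\in\lx\}$ would then be a proper subset of $\lx$, against surjectivity of $\varphi$. This produces exactly the two cases of the statement.
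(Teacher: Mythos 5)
Your ``if'' direction and your injectivity argument are fine (the observation that $\gamma(0)=\infty$ makes injectivity immediate from the upper bound, which is even quicker than the paper's route through the generalized spectrum and Zem\'anek's characterization of the radical). But the heart of the proof --- getting from the two-sided estimate to the standard form --- is not actually carried out. Everything hinges on your proposed characterization of rank-one operators by the number of discontinuities of $\lambda\mapsto\gamma(S+\lambda F)$, and you yourself flag that making this characterization work ``robust under the two-sided estimate rather than exact preservation, and uniform over non-invertible $S$'' is the main obstacle. That concession is exactly where the gap sits: (a) the ``at most one discontinuity'' claim for rank-one $F$ is only justified when $S$ is invertible or has non-closed range, and for a closed-range non-Fredholm $S$ (say with infinite-dimensional kernel and infinite-codimensional range) the behaviour of $\gamma(S+\lambda F)$ is precisely the situation where $\gamma$'s discontinuity is hard to control; (b) since $\varphi$ is only $\QQ$-linear, you can transfer information along a pencil only at rational $\lambda$, and ``number of discontinuities over $\CC$'' of a function you control only on $\QQ$ and only up to the multiplicative corridor $[\beta,\alpha]$ is not a well-defined invariant without further argument; (c) your later steps (matching the singular point of $T_0+\lambda x\otimes f$ with that of the image pencil, and the claim that $\varphi$ maps invertibles to invertibles) all lean on the same unproved machinery.

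For contrast, the paper sidesteps the discontinuity of $\gamma$ entirely by switching to the maximum modulus $\M(\cdot)=\max(\m(\cdot),\q(\cdot))$, which \emph{is} continuous and has open set of non-vanishing. Given $T_0$ with $\M(T_0)>0$, one knows $R_0:=\varphi(T_0)$ has closed range; if $R_0$ were neither injective nor surjective, one picks $x\notin\ran(R_0)$, $y\in\ker(R_0)$, $f$ with $\langle y,f\rangle=1$, and notes $\ker(R_0+rx\otimes f)=\ker(R_0)\cap\ker(f)$, whence $\gamma(R_0+rx\otimes f)\leq r/\dist(y,\ker f)$; writing $x\otimes f=\varphi(S_0)$ and using $\QQ$-linearity plus the lower bound gives $r\geq\beta\delta\,\M(T_0+rS_0)\to\beta\delta\,\M(T_0)>0$ as $r\to0$ through rationals --- a contradiction. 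Thus $\varphi$ preserves the zeros of $\M$ in both directions, and the standard form then comes from the additive-preserver machinery of Cui--Hou and Omladi\v c--\v Semrl (Lemma~2.3 of the paper), where the only thing left to check is that $\varphi(\uno)$ is invertible. You would need either to supply complete proofs of your discontinuity-counting lemma and its transfer under $\QQ$-linearity, or to replace that step by an argument of this kind.
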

\begin{proof}
Obviously, we only need to prove the "only if" part. Assume that there are $\alpha,~\beta>0$ such that
\begin{equation}\label{gamma}
\beta\gamma(T)\leq \gamma(\varphi(T))\leq\alpha\gamma(T)
\end{equation} for all $T\in\lx$, and let us prove that $\varphi$ preserves the zeros of $\M(.)$ in both directions (i.e., if $T\in\lx$, then  $\M(T)=0\iff\M(\varphi(T))=0$).
We first show that $\varphi$ is injective. Assume that $\varphi(T_0)=0$ for some $T_0\in\lx$, and note that it follows from (\ref{gamma}) that
$${\beta}/{\alpha}\gamma(T+T_0-\lambda)\leq\gamma(T-\lambda)\leq{\alpha}/{\beta}\gamma(T+T_0-\lambda)$$
for all $T\in\lx$ and all $\lambda\in\CC$. It follows that 
\begin{eqnarray*}
\lambda\in\sigma_g(T+T_0)&\iff&\lim_{z\to\lambda}\gamma(T+T_0-\lambda)=0\\
&\iff&\lim_{z\to\lambda}\gamma(T-\lambda)=0\\
&\iff&\lambda\in\sigma_g(T),
\end{eqnarray*}
and $\sigma_g(T+T_0)=\sigma_g(T)$ for all $T\in\lx$. As the boundary of the  spectrum is contained in the generalized spectrum, we have  $\r(T+T_0)=\r(T)$ for all $T\in\lx$ and $T_0=0$ by
Zem\' anek spectral characterization of the radical; see \cite[Theorem 5.3.1]{Au}. Therefore, $\varphi$ is injective and is, in fact, a bijective map and its inverse satisfies
a similar inequality to (\ref{gamma}). So, we only need to show that $\varphi$ preserves the zeros of $\M(.)$ in one direction.

Now, assume that $T_0\in\lx$ is an operator for which $\M(T_0)>0$ and let us show that $\M(R_0)>0$ where $R_0:=\varphi(T_0)$. 
Note that, since $\gamma(R_0)\geq\beta\gamma(T_0)>0$, the operator $R_0$ has a closed range. To see that $\M(R_0)>0$, it suffices  to show that 
$R_0$ is surjective or $\ker(R_0)$ is trivial.
Assume by the way of contradiction that $R_0$ is not surjective and $\ker(R_0)$ is not trivial, and pick 
up two unit vectors $x\not\in\ran(R_0)$ and $y\in\ker(R_0)$. Let $f\in X^*$ be a linear functional such that $\langle y,f\rangle=1$, and 
$r>0$ be a positive rational number. Since $x\not\in\ran(R_0)$, we have $\ker(R_0+rx\otimes f)=\ker(R_0)\cap\ker(f)$ and
\begin{eqnarray*}
r&=&\|(R_0+rx\otimes f)y\|\\
&\geq&\gamma(R_0+rx\otimes f)\dist\big(y,\ker(R_0+rx\otimes f)\big)\\
&\geq&\delta\gamma(R_0+rx\otimes f),
\end{eqnarray*}
where $\delta:=\dist\big(y,\ker(f)\big)$ which is of course positive.
Since $\varphi$ is surjective, there is $S_0\in\lx$ such that $\varphi(S_0)=x\otimes f$.  Keep in mind that $\varphi$ is 
$\QQ$-linear and note that it follows from (\ref{gamma}) that 
$$r\geq\delta\gamma(R_0+rx\otimes f)=\delta\gamma(\varphi(T_0+rS_0)\geq\beta \delta\gamma(T_0+rS_0).$$
As the set of all operators with positive maximum modulus is open, it follows that $\M(T_0+r S_0)>0$ and  thus 
$$r\geq\beta \delta\gamma(T_0+rS_0)=\beta\delta\M(T_0+r S_0)$$
for all sufficiently small rational numbers $r$. As the maximum modulus is a continuous function, the right side of the inequality tends to $\beta\delta M(T_0)>0$ as $r$ goes to $0$,
and thus one gets a contradiction. We therefore have $\M(R_0)=\M(\varphi(T_0))>0$;  as desired. 

Finally, apply next lemma to get the desired forms of $\varphi$. 
\end{proof}

The next lemma and its proof were sitting in  \cite[Theorem 3.1 and its proof]{Cui-Hou} and needed only a simple step to be discovered therein.  It was also observed in \cite{bendaoud} but only 
in the Hilbert space operators case; see \cite[Corollary 2.3]{bendaoud}. 

\begin{lemma}\label{c}Assume that $\c(.)$ stands for any one of the spectral quantities $\m(.)$, $\q(.)$ and $\M(.)$. If $\varphi:\lx\to\ly$ is an additive surjective map
preserving the zeros of $\c(.)$ (i.e., if $T\in\lx$, then  $\c(T)=0\iff\c(\varphi(T))=0$),  then either there are bijective continuous mappings $A:X\to Y$ and $B:Y\to X$ both linear or both conjugate linear such that $\varphi(T)=ATB$ for all $T\in\lx$, or there are bijective continuous mappings $A:X^*\to Y$ and $B:Y\to X^*$ both linear or both conjugate linear such that $\varphi(T)=AT^*B$ for all $T\in\lx$. This case may occur only if $X$ and $Y$ are reflexive.
\end{lemma}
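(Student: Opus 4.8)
The plan is to prove that $\varphi$ is bijective and carries rank-one operators onto rank-one operators in both directions, and then to invoke the known description of additive bijections of operator algebras preserving rank-one operators (which, through the fundamental theorem of projective geometry, yields exactly the two announced forms). The decisive point is that the rank-one operators inside $\lx$ can be recognized from the zero set $\{T\in\lx:\c(T)=0\}$ alone, which $\varphi$ preserves in both directions, together with the $\QQ$-linearity of $\varphi$. The criterion I would use is
\begin{equation*}
R\text{ has rank }\leq 1\quad\Longleftrightarrow\quad\text{for every }T\in\lx\text{ with }\c(T)\neq0:\ \#\{t\in\QQ:\c(T+tR)=0\}\leq 1.
\end{equation*}
Granting this, the right-hand condition is transported verbatim by $\varphi$: since $\varphi(T+tR)=\varphi(T)+t\varphi(R)$ for $t\in\QQ$, since $\c(\varphi(S))=0\iff\c(S)=0$, and since $\varphi$ is onto, $\varphi$ maps $\{R:\mathrm{rank}\,R\leq 1\}$ onto itself; combined with injectivity (below) this gives that $\varphi$ and $\varphi^{-1}$ preserve the rank-one operators.

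The injectivity of $\varphi$ is obtained exactly as in the proof of Theorem~\ref{ineq-gamma}: if $\varphi(T_0)=0$, then $\varphi(T+tT_0)=\varphi(T)$ for all $T\in\lx$ and $t\in\QQ$, hence $\c(T+tT_0)=0\iff\c(T)=0$; replacing $T$ by $T-\lambda$ gives $\{\lambda:\c(T+T_0-\lambda)=0\}=\{\lambda:\c(T-\lambda)=0\}$, a set which for each of $\c\in\{\m,\q,\M\}$ contains the boundary of $\sigma(\cdot)$ and is contained in $\sigma(\cdot)$; thus $\r(T+T_0)=\r(T)$ for every $T$ and $T_0\in\rad(\lx)=\{0\}$ by Zem\'anek's characterization of the radical. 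For the criterion, suppose first $R=x\otimes f\neq0$ and $\c(T)\neq0$, so that $T$ is bounded below or surjective and in either case has closed range; a short computation of $\ker(T+tx\otimes f)$ and of $\ran(T+tx\otimes f)$ shows that $T+tx\otimes f$ stays bounded below (respectively surjective) for every scalar $t$ with at most one exception --- the exceptional value being $t=-1/\langle v,f\rangle$ when $x=Tv$ (respectively an analogous single value), and absent when $x\notin\ran T$. Since a finite-rank perturbation of a closed-range operator has closed range, this gives $\c(T+tR)\neq0$ for all but at most one $t$, uniformly for $\c\in\{\m,\q,\M\}$. Conversely, if $\mathrm{rank}\,R\geq 2$, pick $x_1,x_2$ with $Rx_1,Rx_2$ linearly independent and distinct nonzero rationals $\mu_1,\mu_2$, and choose a bounded invertible $T\in\lx$ with $Tx_i=(Rx_i)/\mu_i$ (such a $T$ exists, by an elementary argument, as a product of rank-one perturbations of $\uno$); then $x_1,x_2$ are eigenvectors of $T^{-1}R$ for the eigenvalues $\mu_1,\mu_2$, so $T+tR=T(\uno+tT^{-1}R)$ fails to be invertible exactly when $-1/t$ is a nonzero eigenvalue of $T^{-1}R$, in particular for the two rationals $t=-1/\mu_1$ and $t=-1/\mu_2$; being a finite-rank perturbation of the invertible $T$, such a $T+tR$ has index zero, hence is neither bounded below nor surjective, i.e. $\c(T+tR)=0$.

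It then remains to feed ``$\varphi$ is an additive bijection of $\lx$ onto $\ly$ preserving rank-one operators in both directions'' into the structure theorem for such maps: there exist a ring automorphism $\tau$ of $\CC$ and bijective $\tau$-semilinear maps realizing either $\varphi(T)=ATB$ for all $T\in\lx$, or $\varphi(T)=AT^*B$ for all $T\in\lx$. Because $\varphi$ takes $\lx$ \emph{into} $\ly$ --- so that $AB=\varphi(\uno)$ and every $\varphi(\uno+K)$ with $K$ compact is a bounded operator --- the factors $A$ and $B$ must be continuous; hence $\tau$ is the identity or complex conjugation, and $A,B$ are bijective bounded maps with bounded inverses, both linear or both conjugate linear. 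In the second form, surjectivity of $\varphi$ forces $\{T^*:T\in\lx\}=\B(X^*)$, which holds precisely when $X$ is reflexive, and then $Y\cong X^*$ is reflexive as well. I expect the main obstacle to be the rank-one criterion --- isolating it, and above all verifying it uniformly for $\m$, $\q$ and $\M$, which amounts to a careful tracking of the single exceptional value of $t$ across the sub-cases ($x\in\ran T$ or not; $T$ merely bounded below, merely surjective, or invertible) --- whereas everything afterwards is either formal transport along the $\QQ$-linearity of $\varphi$ or an appeal to the now-classical rank-one preserver and reflexivity facts.
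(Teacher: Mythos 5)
Your skeleton (injectivity via Zem\'anek, recognition of rank-one operators from the zero set of $\c(.)$, then a rank-one-preserver structure theorem) runs parallel to the paper's, which obtains the rank-one preservation by citing Lemma 2.1 of Hou--Cui rather than proving an intrinsic criterion. But as written your argument has two genuine gaps.

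First, the converse half of your rank-one criterion is only established for finite-rank $R$. When $R$ has infinite rank, $tR$ is not a finite-rank perturbation of $T$, so the index-zero argument collapses: non-injectivity of $T+tR$ at $t=-1/\mu_i$ gives $\m(T+tR)=0$ but says nothing about surjectivity. For $\c(.)=\m(.)$ there is no problem; for $\c(.)=\q(.)$ you can repair it by running the same construction on $R^*$ (which also has rank $\geq 2$), producing two rational $t$ at which $(T+tR)^*$ fails to be injective, hence $T+tR$ fails to be surjective. For $\c(.)=\M(.)$, however, you must produce two rational values of $t$ at which $T+tR$ is \emph{simultaneously} not bounded below and not surjective, i.e.\ $-1/t\in\sigma_{ap}(T^{-1}R)\cap\sigma_{su}(T^{-1}R)$, and your construction only controls point spectrum. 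Since the criterion has to be valid for \emph{every} $R\in\lx$ --- you need to know that $\varphi$ cannot carry a rank-one operator to an infinite-rank one --- this is a real hole in the $\M$ case, and it is precisely the case the lemma needs for Theorem \ref{ineq-gamma}.

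Second, and more seriously, the final step overstates what the rank-one preserver theorem delivers. The Omladi\v c--\v Semrl description of additive bijections preserving rank one determines $\varphi$ only on the ideal ${\mathcal F}(X)$ of finite-rank operators: it yields $\varphi(F)=AFB$ or $AF^*B$ for finite-rank $F$, and an additive bijection of $\lx$ preserving rank-ones in both directions is \emph{not} thereby determined on all of $\lx$ (one can perturb such a map by an additive map supported off the finite-rank operators without disturbing its action on them). Extending the formula from ${\mathcal F}(X)$ to $\lx$ requires invoking the $\c$-zero-set hypothesis again against operators of the form $T-x\otimes f$; this is exactly where the paper spends most of its effort, namely in proving (in three cases according to $\c$) that $\varphi(\uno)$ is invertible so that Hou--Cui's Theorem 3.3 can be applied to $\varphi(\uno)^{-1}\varphi$. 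Your proposal contains no substitute for this extension step, so the asserted global forms $\varphi(T)=ATB$ and $\varphi(T)=AT^*B$ for all $T\in\lx$ are not actually reached.
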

\begin{proof}We only need to show that $\varphi(\uno)$ is invertible and apply \cite[Theorem 3.3]{Cui-Hou} to $\Phi:={\varphi(\uno)}^{-1}\varphi$ to get the desired conclusion.

Just as at the beginning of the proof of the previous theorem, one can show that $\varphi$ is injective. 
So, the map $\varphi$ is, in fact, bijective  and its inverse $\varphi^{-1}$ preserves  the zeros of $\c(.)$ as well. Lemma 2.1 of  \cite{Cui-Hou} applied to  $\varphi$ and its inverse shows that $\varphi$ is an 
additive bijection between the ideals ${\mathcal F}(X)$  and  ${\mathcal F}(Y)$ of all finite rank operators on $X$ and $Y$, and that $\varphi$ preserves rank-one operators in both directions. The complete description
of such map $\varphi$ when restricted to ${{\mathcal F}(X)}$, given by \cite[Theorem 3.3]{OS}, guaranties  that for any given nonzero element $g\in Y^*$ (resp. $y\in Y$), there are $x\in X$, $f\in X^*$, and $y\in Y$ (resp. $g\in Y^*$)
such that $\langle x,f\rangle=1$ and $\varphi(x\otimes f)=y\otimes g$, and thus
$$\varphi(\uno-x\otimes f)=\varphi(\uno)-\varphi(x\otimes f)=\varphi(\uno)-y\otimes g.$$
Note that, since $\c(\varphi(\uno))>0$, the range $\ran(\varphi(\uno))$ of $\varphi(\uno)$ is closed and so are  $\ran(\varphi(\uno)-y\otimes g)$  and $\ran({\varphi(\uno)}^*-g\otimes y)$. 
As $\c(\varphi(\uno)-y\otimes g)=\c(\uno-x\otimes f)=0$, it follows that $\varphi(\uno)-y\otimes g$ is not injective in case $\c(.)=\m(.)$, and  ${\varphi(\uno)}^*-g\otimes y$ is not injective in case $\c(.)=\q(.)$.
Of course these two operators are not injective in case $\c(.)$ coincides with $\M(.)$. So, to finish the proof of this lemma, we shall discuss three cases.

{\bf Case 1.} Assume that $\c(.)=\m(.)$, and note that $\varphi(\uno)$ is injective as well. Therefore, we only need to show that $\varphi(\uno)$ is surjective. 
Take an arbitrary nonzero element $y\in Y$, and note that, by what has been discussed above, there is $g\in Y^*$ such that $\varphi(\uno)-y\otimes g$ is not 
injective and $(\varphi(\uno)-y\otimes g)z=0$ for some nonzero element $z\in Y$. The injectivity of $\varphi(\uno)$ ensures that $g(z)\not=0$ and implies that $y$ lies in $\ran(\varphi(\uno))$.
This shows that $\varphi$ is surjective and implies that $\varphi(\uno)$ is invertible; as desired.

{\bf Case 2.} Assume that $\c(.)=\q(.)$ and note that ${\varphi(\uno)}^*$ is injective. Pick up an arbitrary nonzero element $g\in Y^*$, and note that
 ${(\varphi(\uno)}^*-g\otimes y)h=0$ for some $y\in Y$ and $0\not=h\in Y^*$. Just as above, we see that $\langle y,h\rangle\not=0$ and $g$ lies in the range of ${\varphi(\uno)}^*$.
 This implies that ${\varphi(\uno)}^*$ is surjective and that $\varphi(\uno)$ is invertible in this case too; as desired.

{\bf Case 3.} Assume finally that $\c(.)=\M(.)$ and note that either $\varphi(\uno)$ is injective or ${\varphi(\uno)}^*$ is injective. If $\varphi(\uno)$ is injective, then, just as in Case 1, we see that $\varphi(\uno)$ is invertible.
When ${\varphi(\uno)}^*$ is injective, then, just as in Case 2, we see that $\varphi(\uno)$ is invertible in this case too. 
\end{proof}

The promised result describes additive surjective maps preserving the reduced minimum modulus of Banach space operators. It extends \cite[Theorem 7.1]{BBS} to the additive preservers and Banach space operators setting, and shows that the 
condition that  $\varphi(\uno)$ is invertible in \cite[Theorem 4.2]{skhiri} is superfluous.

\begin{theorem}\label{gamma-preserving}An additive surjective map $\varphi:\lx\to\ly$ preserves the reduced minimum modulus (i. e., $\gamma(\varphi(T))=\gamma(T)$ for all $T\in\lx$) if and only if either
there are bijective isometries $U:X\to Y$ and $V:Y\to X$ both linear or both conjugate linear such that $\varphi(T)=UTV$ for all $T\in\lx$, or  there are bijective isometries $U:X^*\to Y$ and $V:Y\to X^*$ both linear or both conjugate linear such that $\varphi(T)=UT^*V$ for all $T\in\lx$.
The last case can not occur if any one of $X$ and $Y$ is not reflexive. 
\end{theorem}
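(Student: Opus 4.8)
The plan is to obtain the standard form from Theorem~\ref{ineq-gamma} (really from Lemma~\ref{c}) and then to exploit the \emph{exact} equality $\gamma(\varphi(T))=\gamma(T)$ on rank-one operators — where $\gamma$ degenerates to the operator norm — to normalize the two bijections into isometries. The "if" part is routine: when $U,V$ are bijective isometries one has $\ker(UTV)=V^{-1}(\ker T)$, and since $U,V$ are isometric, ${\dist}\big(y,\ker(UTV)\big)={\dist}(Vy,\ker T)$ and $\|UTVy\|=\|TVy\|$; letting $z=Vy$ run over $X$ gives $\gamma(UTV)=\gamma(T)$, and the same computation together with $\gamma(T^*)=\gamma(T)$ (valid for every operator; see \cite{Mu}) gives $\gamma(UT^*V)=\gamma(T)$. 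Only $\|Ux\|=\|x\|$ and $\|Vy\|=\|y\|$ are used, so the conjugate-linear case is identical.

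For the "only if" part, assume $\gamma(\varphi(T))=\gamma(T)$ for all $T\in\lx$; this is inequality~(\ref{gamma}) with $\alpha=\beta=1$. Arguing exactly as in the proof of Theorem~\ref{ineq-gamma} — if $\varphi(T_0)=0$ then $\gamma(T+T_0)=\gamma(\varphi(T+T_0))=\gamma(\varphi(T))=\gamma(T)$ for all $T$, hence $\sigma_g(T+T_0)=\sigma_g(T)$, hence $\r(T+T_0)=\r(T)$, hence $T_0=0$ by Zem\'anek's characterization of the radical, so $\varphi$ is bijective; and then one shows as there that $\varphi$ preserves the zeros of $\M(\cdot)$ in both directions — Lemma~\ref{c} (with $\c(\cdot)=\M(\cdot)$) applies and yields that $\varphi$ has one of the two forms $\varphi(T)=ATB$ or $\varphi(T)=AT^*B$, with $A,B$ bijective continuous, both linear or both conjugate linear, the second form occurring only when $X$ and $Y$ are reflexive.

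It remains to improve $A,B$ to isometries, and the key observation is that $\gamma(x\otimes f)=\|x\|\,\|f\|=\|x\otimes f\|$ for all nonzero $x\in X$, $f\in X^*$ (since $\ker(x\otimes f)=\ker f$ and ${\dist}(y,\ker f)=|f(y)|/\|f\|$). In the first case a direct computation gives $\varphi(x\otimes f)=Ax\otimes B^*f$, where $B^*\colon X^*\to Y^*$ is the bijective adjoint in the linear case and its conjugate-linear analogue $f\mapsto\overline{f(B\,\cdot\,)}$ — which has the same norm and the same inverse norm as $B$ — in the conjugate-linear case. Equating $\gamma(\varphi(x\otimes f))$ with $\gamma(x\otimes f)$ then reads $\|Ax\|\,\|B^*f\|=\|x\|\,\|f\|$ for all $x,f$. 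Fixing $f$ with $\|f\|=1$ and $B^*f\neq0$ forces $\|Ax\|=c\|x\|$ for all $x$, with $c:=\|B^*f\|^{-1}>0$, and feeding this back gives $\|B^*g\|=\|g\|/c$ for all $g$, whence $\|B^*\|=1/c$ and, $B^*$ being bijective, $\|(B^*)^{-1}\|=c$; thus $\|B\|=1/c$ and $\|B^{-1}\|=c$, and the estimates $\|By\|\le\frac1c\|y\|$ and $\|y\|=\|B^{-1}By\|\le c\|By\|$ squeeze out $\|By\|=\frac1c\|y\|$. Therefore $U:=c^{-1}A$ and $V:=cB$ are bijective isometries (both linear or both conjugate linear) with $\varphi(T)=ATB=UTV$. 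In the second case the analogous computation gives $\varphi(x\otimes f)=Af\otimes B^*\hat x$, with $\hat x$ the canonical image of $x$ in $X^{**}$; as $X$ is reflexive, $\hat x$ exhausts $X^{**}$, so $\|Af\|\,\|B^*\xi\|=\|f\|\,\|\xi\|$ for all $f\in X^*$, $\xi\in X^{**}$, and exactly as before $U:=c^{-1}A$ and $V:=cB$ are bijective isometries with $\varphi(T)=AT^*V^{-1}\!\cdot$—more precisely $\varphi(T)=AT^*B=UT^*V$. That this second form forces $X$ and $Y$ reflexive is already recorded in Lemma~\ref{c}.

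The main obstacle I anticipate is recognizing where the extra rigidity beyond Theorem~\ref{ineq-gamma} lives: the passage from a two-sided estimate to an equality should be harvested by testing $\gamma(\varphi(\cdot))=\gamma(\cdot)$ precisely on rank-one operators, since it is exactly there that the discontinuous, awkward functional $\gamma$ collapses to the well-behaved operator norm. Once $\varphi$ is known to be of one of the standard forms, the rest is a short normalization; the conjugate-linear case and the adjoint case, while requiring a little bookkeeping about the relevant "adjoint'' maps, run strictly parallel to the linear non-adjoint case.
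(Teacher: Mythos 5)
Your argument is correct, and the first half coincides with the paper's: reduce to Theorem~\ref{ineq-gamma} (whose proof already contains the injectivity and zero-of-$\M(\cdot)$ arguments you re-sketch) to get $\varphi(T)=ATB$ or $\varphi(T)=AT^*B$. Where you genuinely diverge is the normalization of $A$ and $B$ into isometries. The paper tests the equality $\gamma(\varphi(T))=\gamma(T)$ on \emph{invertible} operators, using $\gamma(T^{-1})=\|T\|^{-1}$ to convert it into $\|B^{-1}TA^{-1}\|=\|T\|$ for all invertible $T$, and then invokes Lemma~\ref{AB} (hence Lemma~\ref{isometry}, whose proof itself perturbs $\uno/n+x\otimes f$). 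You instead test the equality directly on \emph{rank-one} operators, using $\gamma(x\otimes f)=\|x\|\,\|f\|$ and $\varphi(x\otimes f)=Ax\otimes B^*f$ to get $\|Ax\|\,\|B^*f\|=\|x\|\,\|f\|$, from which the scaling constants fall out immediately and without any phase ambiguity (the paper's $|\lambda\mu|=1$ versus its asserted $\lambda\mu=1$ is glossed over; your route avoids the issue entirely). Your computations check out: $\gamma(x\otimes f)=\|x\|\|f\|$ since $\dist(y,\ker f)=|f(y)|/\|f\|$; bijectivity of $B$ gives $B^*f\neq0$; and the passage from ``$cB^*$ isometric'' to ``$cB$ isometric'' via $\|B\|=\|B^*\|$ and $\|B^{-1}\|=\|(B^*)^{-1}\|$ is sound, as is the reflexivity step in the adjoint case. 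The trade-off is that the paper's Lemmas~\ref{isometry} and \ref{AB} are reusable for Theorems~\ref{ms} and \ref{mm}, where your rank-one trick would fail (in infinite dimensions $\m(x\otimes f)=\q(x\otimes f)=0$, so rank-one operators carry no information about those moduli), whereas your argument is shorter and self-contained for the $\gamma$-preserver theorem itself.
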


The proof of Theorem \ref{gamma-preserving} uses the following lemmas  quoted from  \cite[Theorem 3.1 and Corollary 3.2]{skhiri}. 
The proofs presented therein are long and require several computations and applications of Hahn-Banach 
Theorem. Here, we propose simple and shorter proofs.
\begin{lemma}\label{isometry}For a bijective mapping  $A\in\lxy$, the following statements are equivalent.
\begin{itemize}
\item[(i)] $\|ATA^{-1}\|=\|T\|$ for all invertible operators $T\in\lx$.
\item[(ii)] $\|ATA^{-1}\|\leq\|T\|$ for all invertible operators $T\in\lx$.
\item[(iii)] $\|ATA^{-1}\|\geq\|T\|$ for all invertible operators $T\in\lx$.
\item[(iv)] $A$ is an isometry multiplied by a scalar. 
\end{itemize}
\end{lemma}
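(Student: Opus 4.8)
The plan is to establish the easy implications $(i)\Rightarrow(ii)$, $(i)\Rightarrow(iii)$ and $(iv)\Rightarrow(i)$, and then the two substantive ones $(ii)\Rightarrow(iv)$ and $(iii)\Rightarrow(iv)$. For $(iv)\Rightarrow(i)$ I would simply write $A=\lambda W$ with $\lambda\neq0$ and $W:X\to Y$ a bijective isometry; then $W^{-1}$ is again a bijective isometry, so for every invertible $T\in\lx$ one gets $\|ATA^{-1}\|=\|WTW^{-1}\|=\sup_{\|y\|=1}\|TW^{-1}y\|=\sup_{\|x\|=1}\|Tx\|=\|T\|$, because $W^{-1}$ carries the unit ball of $Y$ bijectively onto that of $X$. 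It then remains to extract $(iv)$ from $(ii)$ and from $(iii)$, and it is worth noting first that, since $A$ is bijective, $(iv)$ amounts to the single statement $\|Ax\|=\lambda\|x\|$ $(x\in X)$ for some constant $\lambda>0$, i.e. to $\|A\|\,\|A^{-1}\|=1$.

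The key idea for both implications is to test the hypothesis on rank-one perturbations of scalars, which are honestly invertible. Fix $x\in X$, $f\in X^*$ (we may assume $x\neq0$ and $f\neq0$, the remaining cases being trivial), and observe that $\mu\uno+x\otimes f$ is invertible whenever the scalar $\mu$ avoids $0$ and $-\langle x,f\rangle$, in particular for all small $\mu\neq0$. A one-line computation gives $A(x\otimes f)A^{-1}=(Ax)\otimes\big((A^{-1})^*f\big)$, so that $A(\mu\uno+x\otimes f)A^{-1}$ equals $\mu$ times the identity of $Y$ plus the rank-one operator $(Ax)\otimes\big((A^{-1})^*f\big)$. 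Inserting $T=\mu\uno+x\otimes f$ into the inequality of $(ii)$ (respectively $(iii)$) and letting $\mu\to0$, the continuity of the operator norm yields
\[
\|Ax\|\,\big\|(A^{-1})^*f\big\|\ \le\ \|x\|\,\|f\| \qquad \Big(\text{resp. }\ \|Ax\|\,\big\|(A^{-1})^*f\big\|\ \ge\ \|x\|\,\|f\|\Big)
\]
for all $x\in X$ and $f\in X^*$.

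To conclude, I would fix $x\neq0$ and optimize the last inequality over $f$, using the standard facts that $\|(A^{-1})^*\|=\|A^{-1}\|$ and $\m\big((A^{-1})^*\big)=\m(A^{-1})=\|A\|^{-1}$ for the bijective operator $A^{-1}$, together with $\m(A)=\|A^{-1}\|^{-1}$ from \eqref{invertible}. Under $(ii)$, taking the infimum over $f$ of $\|f\|/\|(A^{-1})^*f\|$, which equals $\|(A^{-1})^*\|^{-1}=\m(A)$, gives $\|Ax\|\le\m(A)\|x\|$; combined with the trivial lower bound $\|Ax\|\ge\m(A)\|x\|$ this forces $\|Ax\|=\m(A)\|x\|$ for all $x$, which is $(iv)$. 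Under $(iii)$, taking instead the supremum over $f$ of $\|f\|/\|(A^{-1})^*f\|$, which equals $\m\big((A^{-1})^*\big)^{-1}=\|A\|$, gives $\|Ax\|\ge\|A\|\,\|x\|$; combined with $\|Ax\|\le\|A\|\,\|x\|$ this again forces $(iv)$. The whole argument uses nothing beyond rank-one perturbations and these elementary norm and duality identities, so the only points deserving a careful word are the invertibility of $\mu\uno+x\otimes f$ for small $\mu\neq0$ and the interchange of the norm with the limit $\mu\to0$; I do not anticipate any genuine obstacle, the one real step being the choice of the test operators.
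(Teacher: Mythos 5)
Your proof is correct and rests on exactly the same key device as the paper's: testing the inequality on the invertible operators $\mu\uno+x\otimes f$ (the paper uses $\mu=1/n$) and letting the scalar tend to $0$ to obtain $\|Ax\|\,\|(A^{-1})^*f\|\leq\|x\|\,\|f\|$, then deducing $\|A\|\,\|A^{-1}\|=1$. The only cosmetic difference is in (iii)$\Rightarrow$(iv): you rerun the rank-one computation with the reversed inequality and optimize over $f$, whereas the paper gets it in one line by noting that (iii) for $A$ is (ii) for $A^{-1}$ (substitute $T=A^{-1}SA$) and invoking the already-proved implication.
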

\begin{proof}Obviously, the implications  (i)$\Rightarrow$(ii), (i)$\Rightarrow$(iii) and (iv)$\Rightarrow$(i) are always there.
So, we only need to establish the implications (ii)$\Rightarrow$(iv) and (iii)$\Rightarrow$(iv). 

Assume that $\|ATA^{-1}\|\leq\|T\|$ for all invertible operators $T\in\lx$. In particular, we have
$\|\frac{\uno}{n}+A(x\otimes f)A^{-1}\|=\|A\left(\frac{\uno}{n}+x\otimes f\right)A^{-1}\|\leq\|\frac{\uno}{n}+x\otimes f\|$
for all positive integers $n$, $x\in X$ and $f\in X^*$. Taking the limit, as $n$ goes to $\infty$, of both sides of this inequality, we get that
$\|Ax\|\|{A^{-1}}^*f\|=\|A(x\otimes f)A^{-1}\|\leq \|x\otimes f\|=\|x\|\|f\|$
for all $x\in X$ and $f\in X^*$. Thus, $\|A\|\|A^{-1}\|\leq1$ and 
$\|A\|\|x\|\leq\frac{\|x\|}{\|A^{-1}\|}\leq\|Ax\|\leq\|A\|\|x\|$
for all $x\in X$. This shows that ${A}/{\|A\|}$ is a bijective isometry and establishes the implication (ii)$\Rightarrow$(iv).

Now, assume that $\|ATA^{-1}\|\geq\|T\|$ for all invertible operators $T\in\lx$. It follows that  $\|A^{-1}SA\|\leq\|S\|$ for all invertible operators $S\in\ly$
and ${A^{-1}}/{\|A^{-1}\|}$ is a bijective isometry by the established  implication (ii)$\Rightarrow$(iv). Hence ${A}/{\|A\|}$ is a bijective isometry as well, and the implication (iii)$\Rightarrow$(iv)
is established.
\end{proof}
\begin{lemma}\label{AB}For two bijective transformations $A\in\lxy$ and $B\in\lyx$, the following statements are equivalent.
\begin{itemize}
\item[(i)] $\|ATB\|=\|T\|$ for all invertible operators $T\in\lx$.
\item[(ii)] $A$ and $B$ are isometries multiplied by scalars $\lambda$ and $\mu$ such that $|\lambda\mu|=1$.
\end{itemize}
\end{lemma}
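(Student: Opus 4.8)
The plan is to deduce Lemma \ref{AB} from Lemma \ref{isometry} by a factorization trick, rather than re-running the Hahn--Banach computations. The implication (ii)$\Rightarrow$(i) is immediate: if $A=\lambda A_0$ and $B=\mu B_0$ with $A_0\in\lxy$ and $B_0\in\lyx$ bijective isometries and $|\lambda\mu|=1$, then since $B_0$ carries the closed unit ball of $Y$ onto that of $X$ and $A_0$ preserves norms, one has $\|ATB\|=|\lambda\mu|\sup\{\|A_0TB_0y\|:\|y\|\le1\}=|\lambda\mu|\sup\{\|Tx\|:\|x\|\le1\}=\|T\|$ for every $T\in\lx$ (not only the invertible ones).

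For the converse, I would set $C:=AB\in\ly$, which is bijective, and note that taking $T=\uno$ in (i) gives $\|C\|=1$. Now for each invertible $T\in\lx$, factor $ATB=(AB)(B^{-1}TB)=C(B^{-1}TB)$; writing $R:=B^{-1}TB$, which runs over all invertible operators of $\ly$ as $T$ runs over those of $\lx$ (since $T=BRB^{-1}$ is invertible whenever $R$ is), hypothesis (i) yields
$$\|BRB^{-1}\|=\|T\|=\|CR\|\le\|C\|\,\|R\|=\|R\|.$$
Thus $B\in\lyx$ satisfies condition (ii) of Lemma \ref{isometry} (with the roles of $X$ and $Y$ interchanged), so $B$ is a bijective isometry multiplied by a scalar $\mu$. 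Symmetrically, factoring $ATB=(ATA^{-1})(AB)=(ATA^{-1})C$ and writing $S:=ATA^{-1}$, which sweeps out all invertible operators of $\ly$, one gets $\|A^{-1}SA\|=\|T\|=\|SC\|\le\|S\|\,\|C\|=\|S\|$ for all invertible $S\in\ly$; hence $A^{-1}$, and therefore $A$, is a bijective isometry multiplied by a scalar $\lambda$. Finally, the computation of the first paragraph gives $\|T\|=\|ATB\|=|\lambda\mu|\,\|T\|$ for all invertible $T$, and $T=\uno$ forces $|\lambda\mu|=1$.

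There is no genuine obstacle once Lemma \ref{isometry} is in hand; the only points to watch are the bookkeeping — that $B^{-1}TB$ and $ATA^{-1}$ really do exhaust the invertibles of $\ly$, and that Lemma \ref{isometry} is invoked with the correct bijective map ($B$ and then $A^{-1}$, both viewed in $\lyx$). As an alternative I could argue directly: applying (i) to the invertible operators $\tfrac1n\uno+x\otimes f$, using $A(x\otimes f)B=Ax\otimes B^{*}f$ and letting $n\to\infty$ gives $\|Ax\|\,\|B^{*}f\|=\|x\|\,\|f\|$ for all $x\in X$ and $f\in X^{*}$, whence $A$ is a positive multiple $c$ of a bijective isometry and $B^{*}$ is $c^{-1}$ times one; since $B$ is bijective this forces $\|By\|=c^{-1}\|y\|$ on $Y$, and again $|\lambda\mu|=1$. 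But the route through Lemma \ref{isometry} is shorter, and that is the one I would present.
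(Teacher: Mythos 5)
Your proof is correct and follows essentially the same route as the paper: both reduce (i)$\Rightarrow$(ii) to Lemma \ref{isometry} by a one-line factorization involving $AB$ (the paper writes $ATA^{-1}=A(TA^{-1}B^{-1})B$ and uses $\|A^{-1}B^{-1}\|=1$, while you write $ATB=C(B^{-1}TB)$ and $ATB=(ATA^{-1})C$ and use $\|C\|=\|AB\|=1$). The bookkeeping you flag --- that $B^{-1}TB$ and $ATA^{-1}$ exhaust the invertible operators of $\ly$, and that Lemma \ref{isometry} is invoked for the bijective maps $B$ and $A^{-1}$ in $\lyx$ --- is handled correctly.
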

\begin{proof}
We only need to show that the first statement implies the other one. So, assume that $\|ATB\|=\|T\|$ for all invertible operators $T\in\lx$, and note that 
$\|AB\|=\|A^{-1}B^{-1}\|=1.$ Thus for every invertible operator $T\in\lx$, we have
\[
\|ATA^{-1}\|=\|A(TA^{-1}B^{-1})B\|=\|TA^{-1}B^{-1}\|\leq\|T\|\|A^{-1}B^{-1}\|=\|T\|.
\]
By Lemma \ref{isometry}, there is an isometry $U$ and a scalar $\lambda$ such that $A=\lambda U$.  By similar argument, we also see that $B=\mu V$ for some  isometry $V$ and a scalar $\mu$. These together with the fact that $\|AB\|=1$ imply that $|\lambda\mu|=1$, and the proof is complete. 
\end{proof}

We are now in a position to prove Theorem \ref{gamma-preserving}.
\begin{proof}[Proof of Theorem \ref{gamma-preserving}]
Suppose that  $\varphi:\lx\to\lx$ is an additive surjective map for which $\gamma(\varphi(T))=\gamma(T)$ for all $T\in\lx$.  By Theorem \ref{ineq-gamma}, either 
there are bijective continuous mappings $A:X\to X$ and $B:X\to X$ both linear or both conjugate linear such that $\varphi(T)=ATB$ for all $T\in\lx$, or  
there are bijective continuous mappings $A:X^*\to X$ and $B:X\to X^*$ both linear or both conjugate linear such that $\varphi(T)=AT^*B$ for all $T\in\lx$.

Assume without loss of generality that the first possibility holds, and note that 
$$\frac{1}{\|T\|}={\gamma(T^{-1})}={\gamma(\varphi(T^{-1}))}=\frac{1}{\|\varphi(T^{-1})^{-1}\|}=\frac{1}{\|B^{-1}TA^{-1}\|}$$
for all invertible operators $T\in\lx$. By Lemma \ref{AB}, there are isometries $U:X\to Y$ and $V:Y\to X$ both linear or both conjugate linear, and scalars $\lambda$ and $\mu$ such that
$A=\lambda U$ and $B=\mu V$ and $\lambda \mu=1$. Thus
 $$\varphi(T)=ATB=(\lambda U)T(\mu V)=UTV$$ for all $T\in\lx$; as desired. 
\end{proof}

Before closing this section, we mention that the statement of Theorem \ref{ineq-gamma} and Theorem \ref{gamma-preserving} for the Hilbert space operators case
need be slightly modified in an obvious way, and that, in view of (\ref{invertible}), Lemma \ref{isometry} and Lemma \ref{AB} can be stated in a similar way when replacing 
the norm by any one of the spectral functions $\gamma(.)$, $\m(.)$, $\q(.)$, and $\M(.)$.

\section{Consequences and comments}
This section is devoted for some comments and applications of Lemma \ref{c} and Lemma \ref{AB}. 
Having these lemmas in hand, the same proof of Theorem \ref{gamma-preserving}, with no extra efforts,  yields the following two theorems. The first one describes surjective additive maps from $\lx$ onto $\ly$ preserving  the minimum and the surjectivity moduli
of Banach space operators. While the other one characterizes surjective additive maps from $\lx$ onto $\ly$ preserving  the maximum modulus.

\begin{theorem}\label{ms}If $\varphi:\lx\to\ly$ is an additive surjective map preserving either the minimum modulus or the surjectivity modulus, then either
there are bijective isometries $U:X\to Y$ and $V:Y\to X$ both linear or both conjugate linear such that $\varphi(T)=UTV$ for all $T\in\lx$, or 
 there are bijective isometries $U:X^*\to Y$ and $V:Y\to X^*$ both linear or both conjugate linear such that $\varphi(T)=UT^*V$ for all $T\in\lx$.
\end{theorem}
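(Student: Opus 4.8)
The plan is to follow, line for line, the proof of Theorem~\ref{gamma-preserving}, with $\gamma(\cdot)$ replaced by whichever of $\m(\cdot)$ and $\q(\cdot)$ is preserved by $\varphi$; call it $\c(\cdot)$. Since $\c(\varphi(T))=\c(T)$ for every $T\in\lx$, the map $\varphi$ preserves the zeros of $\c(\cdot)$ in both directions, so Lemma~\ref{c} applies and puts $\varphi$ in one of two standard forms: either $\varphi(T)=ATB$ for all $T\in\lx$, with $A\colon X\to Y$ and $B\colon Y\to X$ bijective continuous mappings, both linear or both conjugate linear; or $\varphi(T)=AT^*B$ for all $T\in\lx$, with $A\colon X^*\to Y$ and $B\colon Y\to X^*$ bijective continuous mappings, both linear or both conjugate linear, in which case $X$ and $Y$ are reflexive. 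All that remains is to upgrade $A$ and $B$ to bijective isometries, exactly as in the last paragraph of the proof of Theorem~\ref{gamma-preserving}.

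Suppose first that $\varphi(T)=ATB$. For an invertible $T\in\lx$ the operator $\varphi(T^{-1})=AT^{-1}B$ is again a bijection, with inverse $B^{-1}TA^{-1}\in\ly$, so by (\ref{invertible}) we have $\c\big(\varphi(T^{-1})\big)=\|B^{-1}TA^{-1}\|^{-1}$ while $\c(T^{-1})=\|T\|^{-1}$. The hypothesis $\c(\varphi(T^{-1}))=\c(T^{-1})$ then forces $\|B^{-1}TA^{-1}\|=\|T\|$ for all invertible $T\in\lx$, and Lemma~\ref{AB}, applied to the pair $(B^{-1},A^{-1})\in\lxy\times\lyx$, shows that $B^{-1}$ and $A^{-1}$, hence $A$ and $B$, are bijective isometries multiplied by scalars $\lambda$ and $\mu$ with $|\lambda\mu|=1$. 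Writing $A=\lambda U$ and $B=\mu V$ for bijective isometries $U\colon X\to Y$ and $V\colon Y\to X$, both linear or both conjugate linear, we get $\varphi(T)=(\lambda\mu)\,UTV$; since $\lambda\mu$ is unimodular, absorbing it into $U$ gives $\varphi(T)=UTV$ for all $T\in\lx$.

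In the transposed case $\varphi(T)=AT^*B$, for invertible $T\in\lx$ the operator $\varphi(T^{-1})=A(T^{-1})^*B$ is a bijection with inverse $B^{-1}T^*A^{-1}$, and (\ref{invertible}) together with $\c(\varphi(T^{-1}))=\c(T^{-1})$ gives $\|B^{-1}T^*A^{-1}\|=\|T^*\|$ for all invertible $T\in\lx$. Here we use that $X$ is reflexive: the adjoints of the invertible operators in $\lx$ exhaust the invertible operators in $\mathcal B(X^*)$, so the previous identity says $\|B^{-1}SA^{-1}\|=\|S\|$ for every invertible $S\in\mathcal B(X^*)$. Lemma~\ref{AB}, now applied with $X^*$ in place of $X$ to the pair $(B^{-1},A^{-1})\in\mathcal B(X^*,Y)\times\mathcal B(Y,X^*)$, then yields $A=\lambda U$ and $B=\mu V$ for bijective isometries $U\colon X^*\to Y$ and $V\colon Y\to X^*$, both linear or both conjugate linear, with $|\lambda\mu|=1$, whence $\varphi(T)=UT^*V$ after absorbing $\lambda\mu$ into $U$.

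There is essentially no new difficulty here beyond the proof of Theorem~\ref{gamma-preserving}; the only step worth flagging is the transposed case, where one must invoke reflexivity of $X$ precisely so that $\{T^*:T\in\lx\text{ invertible}\}$ fills out the invertible part of $\mathcal B(X^*)$ and Lemma~\ref{AB} becomes applicable. The rest is the routine reduction to invertible operators through formula (\ref{invertible}) and the absorption of a unimodular scalar, just as for the reduced minimum modulus.
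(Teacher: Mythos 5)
Your proposal is correct and follows exactly the route the paper intends: apply Lemma~\ref{c} directly (preservation of $\m$ or $\q$ trivially gives preservation of its zeros in both directions), then use formula (\ref{invertible}) on invertible operators together with Lemma~\ref{AB} to upgrade $A$ and $B$ to scalar multiples of isometries. Your explicit treatment of the transposed case, including the observation that reflexivity makes $\{T^*: T\in\lx \text{ invertible}\}$ exhaust the invertible elements of $\mathcal{B}(X^*)$ so that Lemma~\ref{AB} applies over $X^*$, is a detail the paper leaves implicit ("without loss of generality") but is handled correctly.
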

From the definitions of the minimum and surjectivity moduli, these quantities are always preserved by maps of the form $\varphi(T)=UTV,\,(T\in\lx),$ 
where $U:X\to Y$ and $V:Y\to X$ are both linear or both conjugate linear bijective isometries.  While if $\varphi$ preserves the minimum modulus (resp. the surjectivity modulus), 
then the second conclusion of the previous theorem can not occur if any one of $X$ and $Y$ is not reflexive or if there is a non invertible surjective (resp. non invertible bounded below) operator in $\lx$.
We also mention that in \cite{GM},  Gowers and  Maurey constructed an infinite-dimensional, separable, reflexive complex Banach space $X$
such that $\sigma(T)$ is countable for all $T\in\lx$. Therefore, $\sigma(T)=\sigma_{ap}(T)=\sigma_{su}(T)$ for all $T\in\lx$, and every surjective or bounded below linear operator in $\lx$ is invertible.

\begin{theorem}\label{mm}An additive surjective map $\varphi:\lx\to\ly$ preserves the maximum modulus (i. e., $\M(\varphi(T))=\M(T)$ for all $T\in\lx$) if and only if either
there are bijective isometries $U:X\to Y$ and $V:Y\to X$ both linear or both conjugate linear such that $\varphi(T)=UTV$ for all $T\in\lx$, or  there are bijective isometries $U:X^*\to Y$ and $V:Y\to X^*$ both linear or both conjugate linear such that $\varphi(T)=UT^*V$ for all $T\in\lx$.
The last case can not occur if any one of $X$ and $Y$ is not reflexive. 
\end{theorem}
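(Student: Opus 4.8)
The plan is to derive Theorem \ref{mm} by mimicking the proof of Theorem \ref{gamma-preserving}, using the fact that the relevant structural inputs are already in place. First I would observe that the ``if'' direction is trivial: if $\varphi(T)=UTV$ with $U,V$ bijective isometries (linear or conjugate linear), then $\M(\varphi(T))=\M(UTV)=\M(T)$ directly from the definitions of $\m(.)$ and $\q(.)$, since pre- and post-composition with surjective isometries permutes unit balls and kernels/ranges appropriately; and if $\varphi(T)=UT^*V$, then one uses in addition the identity $\M(T^*)=\M(T)$ valid whenever $\M(T)>0$, noting that both sides are $0$ simultaneously. So the work is entirely in the ``only if'' direction.

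For the ``only if'' direction, suppose $\M(\varphi(T))=\M(T)$ for all $T\in\lx$. Since $\M$ preserves zeros in both directions trivially (equality forces it), Lemma \ref{c} applied with $\c(.)=\M(.)$ gives that either $\varphi(T)=ATB$ for bijective continuous $A:X\to Y$, $B:Y\to X$ both linear or both conjugate linear, or $\varphi(T)=AT^*B$ for bijective continuous $A:X^*\to Y$, $B:Y\to X^*$ both linear or both conjugate linear, the latter only when $X,Y$ are reflexive. In the first case, for every invertible $T\in\lx$ the operator $\varphi(T)=ATB$ is again bijective, so by \eqref{invertible} we have $\|(ATB)^{-1}\|^{-1}=\M(\varphi(T))=\M(T)=\|T^{-1}\|^{-1}$, i.e. $\|B^{-1}T^{-1}A^{-1}\|=\|T^{-1}\|$ for all invertible $T$; replacing $T^{-1}$ by an arbitrary invertible operator, $\|B^{-1}SA^{-1}\|=\|S\|$ for all invertible $S$. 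By Lemma \ref{AB} (applied to the bijections $B^{-1}$ and $A^{-1}$), $B^{-1}$ and $A^{-1}$ are isometries times scalars with product of moduli $1$, hence so are $A=\lambda U$ and $B=\mu V$ with $U,V$ bijective isometries and $\lambda\mu=1$; absorbing scalars gives $\varphi(T)=ATB=UTV$. The second (``transpose'') case is handled identically: one writes $\varphi(T)^{-1}=B^{-1}(T^*)^{-1}A^{-1}=B^{-1}(T^{-1})^*A^{-1}$ for invertible $T$, and since $T\mapsto(T^{-1})^*$ ranges over all invertible operators in $\mathcal B(X^*)$, Lemma \ref{AB} again forces $A=\lambda U$, $B=\mu V$ with $|\lambda\mu|=1$, yielding $\varphi(T)=UT^*V$; reflexivity of $X$ and $Y$ is inherited from Lemma \ref{c}.

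There is really no serious obstacle here, since the two key lemmas (\ref{c} and \ref{AB}) have already isolated the hard content; the only point requiring a moment's care is the remark in the excerpt that Lemma \ref{AB} may be ``stated in a similar way when replacing the norm by any one of the spectral functions,'' which via \eqref{invertible} is exactly what lets us feed $\M$-information on invertible operators back into Lemma \ref{AB}. The last sentence of the theorem, that the transpose case cannot occur unless both $X$ and $Y$ are reflexive, is immediate from the corresponding clause of Lemma \ref{c}, so nothing extra is needed.
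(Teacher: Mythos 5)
Your proposal is correct and follows exactly the route the paper intends: the paper explicitly derives Theorem \ref{mm} by repeating the proof of Theorem \ref{gamma-preserving}, i.e., applying Lemma \ref{c} with $\c(.)=\M(.)$ to get the structural form and then using \eqref{invertible} together with Lemma \ref{AB} to upgrade $A$ and $B$ to isometries. Your treatment of both the direct and the transpose case, and of the reflexivity clause, matches the paper's argument.
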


In \cite{Mb3, skhiri}, surjective linear maps on $\lx$ preserving and compressing the generalized spectrum are characterized.  Inspecting the proof of the main result of \cite{hou}, a little bit more can be obtained.

\begin{theorem}
For an additive surjective map $\varphi: \lx \to \ly$, the following are equivalent.
\begin{itemize}
\item[(i)] $\varphi$ preserves the generalized spectrum (i.e., $\sigma_g(\varphi(T))=\sigma_g(T)$ for all $T\in\lx$).
\item[(ii)] $\varphi$ does not annihilate all rank-one idempotents, and compresses the generalized spectrum (i.e., $\sigma_g(\varphi(T))\subset\sigma_g(T)$ for all $T\in\lx$).
\item[(iii)] $\varphi$ decompresses the generalized spectrum (i.e., $\sigma_g(\varphi(T))\supset\sigma_g(T)$ for all $T\in\lx$).
\item[(iv)]Either $\varphi(T)=ATA^{-1},~(T\in\lx),$ for some isomorphism $A\in\lxy$, or  $\varphi(T)=BT^*B^{-1},~(T\in\lx),$ for some isomorphism  $B\in\mathcal{B}(X^*,Y)$. The last case may occur only if $X$ and $Y$ are reflexive.
\end{itemize}
\end{theorem}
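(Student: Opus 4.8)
The plan is to prove the equivalence of (i)--(iv) by establishing (iv)$\Rightarrow$(i), (i)$\Rightarrow$(ii), (ii)$\Rightarrow$(iv), and then a separate cycle involving (iii). First I would verify the easy direction (iv)$\Rightarrow$(i): if $\varphi(T)=ATA^{-1}$ for an isomorphism $A$, then $\varphi(T)-z=A(T-z)A^{-1}$, so $\gamma(\varphi(T)-z)=\gamma(A(T-z)A^{-1})$, and since $A$ and $A^{-1}$ are bounded, $\gamma(A(T-z)A^{-1})\to 0$ as $z\to\lambda$ exactly when $\gamma(T-z)\to 0$; hence $\sigma_g(\varphi(T))=\sigma_g(T)$. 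The transpose case $\varphi(T)=BT^*B^{-1}$ is handled the same way using $\gamma(S)=\gamma(S^*)$ when $\M(S)>0$ together with the description of $\sigma_g$ in terms of the limit of $\gamma$; one checks that $\sigma_g(T^*)=\sigma_g(T)$ directly from the definition and the identity $\gamma(S-z)=\gamma((S-z)^*)$ on the set where the maximum modulus is positive, which is dense enough near any boundary point of the spectrum. The implication (i)$\Rightarrow$(ii) is almost trivial: preserving the generalized spectrum in particular compresses it, and a map annihilating all rank-one idempotents would send a rank-one idempotent $P$ to $0$, forcing $\sigma_g(0)=\{0\}$ (since $\gamma\equiv\infty$ off $0$) to equal $\sigma_g(P)$, which is false because $\sigma_g(P)$ contains the boundary of $\sigma(P)=\{0,1\}$, so in fact $\sigma_g(P)=\{0,1\}\neq\{0\}$.

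The heart of the matter is (ii)$\Rightarrow$(iv). Here the plan is to follow the method of \cite{hou}. The key observation is that the generalized spectrum detects invertibility: $0\notin\sigma_g(T)$ if and only if $T$ is invertible, because $\lim_{z\to 0}\gamma(T-z)>0$ is equivalent to $T$ being bounded below and surjective in a neighborhood, hence invertible. More generally $\lambda\notin\sigma_g(T)\iff T-\lambda$ is invertible, so $\sigma_g(T)\supseteq\sigma(T)\setminus\{$isolated points where $T-\lambda$ is not invertible but $\gamma$ does not vanish$\}$; in fact one shows $\sigma_g(T)$ always contains $\partial\sigma(T)$ and equals $\sigma(T)$ for large classes of operators, and crucially $\sigma_g(T)=\emptyset$ is impossible and $\sigma_g(T)\subseteq\{0\}$ forces $T$ invertible or $T$ quasinilpotent-like. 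From the compression hypothesis applied to $T$ and to $T-\lambda$ for all $\lambda$, and the fact that $\varphi$ is additive and $\QQ$-linear, one deduces that $\varphi$ sends invertible operators to invertible operators (if $T-\lambda$ is invertible then $\lambda\notin\sigma_g(T)\supseteq\sigma_g(\varphi(T))$, so $\varphi(T)-\lambda$ is invertible provided $\varphi$ commutes with scalars, which additivity only gives over $\QQ$ — so one first needs to upgrade $\varphi$ to be at least real-homogeneous, or argue directly with rational $\lambda$ and a density/continuity argument). Then, as in \cite{hou}, one extracts that $\varphi$ preserves rank-one idempotents up to the standard forms, applies a Hua-type or fundamental-theorem-of-projective-geometry argument on the lattice of idempotents, and arrives at $\varphi(T)=ATA^{-1}$ or $\varphi(T)=BT^*B^{-1}$. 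The reflexivity constraint in the transpose case comes from the fact that $T\mapsto T^*$ is surjective onto $\ly$ only when $Y$ (and $X$) are reflexive.

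For (iii), the plan is to show (iii)$\Rightarrow$(i) by a symmetry/inverse argument: if $\varphi$ decompresses the generalized spectrum then, after showing $\varphi$ is injective (again via Zem\'anek's characterization of the radical as at the start of the proof of Theorem~\ref{ineq-gamma}, using that $\sigma_g$ contains $\partial\sigma$ so controls the spectral radius), $\varphi$ is bijective and $\varphi^{-1}$ compresses the generalized spectrum; moreover $\varphi^{-1}$ does not annihilate all rank-one idempotents because $\varphi$ is injective. So (iii)$\Rightarrow$(ii) holds for $\varphi^{-1}$, hence $\varphi^{-1}$ has a standard form by the already-proved (ii)$\Rightarrow$(iv), and then so does $\varphi$, giving (iv); finally (iv)$\Rightarrow$(i)$\Rightarrow$ everything. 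The main obstacle I anticipate is the passage from merely $\QQ$-linear (additive) to enough homogeneity to run the spectral/invertibility arguments cleanly: the trick, as in \cite{OSP} and \cite{ZFZH}, is that preserving or compressing a spectral set that contains $\partial\sigma(\cdot)$ forces $\r(\varphi(T))\leq\r(T)$ (and then $=$), and Zem\'anek's theorem plus additivity pins down enough structure; but carrying the idempotent-preservation analysis of \cite{hou} from the linear to the additive setting, where one cannot freely scale, is where the real work lies, and it is exactly there that the ideas of \cite{Cui-Hou} on additive rank-one preservers (Lemma~\ref{c} above and \cite[Theorem 3.3]{OS}) must be brought in to replace the linear arguments.
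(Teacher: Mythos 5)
The paper offers no proof of this theorem beyond the remark that it follows by ``inspecting the proof of the main result of \cite{hou}'', so at the level of strategy your plan --- reduce to the additive compression machinery of Hou--Huang, with the rank-one results of \cite{OS} and \cite{Cui-Hou} replacing the linear arguments --- matches the author's intent. However, the step you single out as ``the key observation'' for (ii)$\Rightarrow$(iv) is false: it is not true that $0\notin\sigma_g(T)$ if and only if $T$ is invertible. The generalized spectrum only satisfies $\partial\sigma(T)\subseteq\sigma_g(T)\subseteq\sigma(T)$ and can omit the entire interior of the spectrum. For the unilateral shift $S$ one has $\M(S)=\m(S)=1>0$; the set $\{\M>0\}$ is open and $\gamma=\M$ is continuous on it, so $\lim_{z\to 0}\gamma(S-z)=1$ and $0\notin\sigma_g(S)$ although $S$ is not invertible. (Your justification confuses ``$\gamma(T-z)>0$'', i.e.\ closed range, with ``bounded below and surjective''.) Consequently the deduction that $\varphi$ maps invertibles to invertibles collapses: from $\lambda\notin\sigma_g(T)\supseteq\sigma_g(\varphi(T))$ one cannot conclude that $\varphi(T)-\lambda$ is invertible. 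The mechanism that actually works, and the one \cite{hou} uses, exploits only the sandwich $\partial\sigma\subseteq\sigma_g\subseteq\sigma$: compression gives $\partial\sigma(\varphi(T))\subseteq\sigma(T)$, hence $\sigma(\varphi(T))$ lies in the polynomially convex hull of $\sigma(T)$ and $\r(\varphi(T))\le \r(T)$, after which the additive spectral-radius and rank-one arguments take over; invertibility of images of invertibles is neither available nor needed.

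A secondary gap concerns (iii): you assert injectivity ``again via Zem\'anek's characterization \dots as at the start of the proof of Theorem~\ref{ineq-gamma}''. That argument used a two-sided comparison of $\gamma$, which yielded $\sigma_g(T+T_0)=\sigma_g(T)$ and hence $\r(T+T_0)=\r(T)$ for all $T$, whence $T_0\in\rad(\lx)=\{0\}$. Under decompression alone, $\varphi(T_0)=0$ only gives $\sigma_g(\varphi(T))\supseteq\sigma_g(T)\cup\sigma_g(T+T_0)$, i.e.\ one-sided inequalities on spectral radii, which do not place $T_0$ in the radical without further work. The remaining pieces of your proposal are sound: (iv)$\Rightarrow$(i) does follow from the two-sided comparability of $\gamma(ASB)$ with $\gamma(S)$ for bijective $A,B$ together with $\gamma(S)=\gamma(S^*)$, and (i)$\Rightarrow$(ii) from $\sigma_g(0)=\{0\}\neq\{0,1\}=\sigma_g(P)$ for a rank-one idempotent $P$.
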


We close this paper with a remark. Assume that $\c(.)$ stands for any one of the spectral quantities $\m(.)$, $\q(.)$, $\M(.)$ and $\gamma(.)$, and let $\varphi$ 
be a surjective linear map on $\lx$. Having the paper \cite{BBS} in hand, one can see that,  only if a mild condition on $\varphi(\uno)$ is imposed, the conclusions of above 
results remain the same if replacing the hypothesis ``$\varphi$ preserves the spectral quantity $\c(.)$" by  ``$\varphi$ satisfies either $\c(\varphi(T))\leq\c(T)$ for all $T\in\lx$ or $\c(\varphi(T))\geq\c(T)$ for all $T\in\lx$". For further details, we refer the reader to \cite{BBS}.


\begin{thebibliography}{99}

\bibitem{Au} Aupetit, B., A primer on spectral theory, Springer-Verlag, New York, 1991.

\bibitem{ZFZH}Z.F. Bai and J.C.  Hou, \textit{Additive maps preserving nilpotent operators or spectral radius}, \emph{Acta Math. Sin. (Engl. Ser.)}, \textbf{21} (2005), no. 5, 1167-1182.

\bibitem{bai-hou}Z.F. Bai and J.C. Hou, \textit{Linear maps and additive maps that preserve operators annihilated by a polynomial}, J. Math. Anal. Appl. \textbf{271} (2002), no. 1, 139-154.

\bibitem{bendaoud}M. Bendaoud, \textit{Additive maps preserving the minimum and surjectivity moduli of operators}, \emph{Arch. Math.}, \textbf{92} (2009) 257-265.

\bibitem{BBS}  A. Bourhim, M. Burgos and V. Shulman, \textit{Linear maps preserving the minimum and reduced minimum moduli}, (submitted).

\bibitem{BB1}  A. Bourhim and M. Burgos, \textit{Linear maps preserving the minimum modulus}, Operators and Matrices (To appear).

\bibitem{BB2}  A. Bourhim and M. Burgos, \textit{Linear maps preserving regularity in $C^*$-algebras}, (submitted).

\bibitem{bourhim}{A. Bourhim and T. Ransford}, {\it Additive maps preserving local spectrum}, Integral equations and Operator Theory, \textbf{55} (2006) 377-385.

\bibitem{BreSem96} M. Bre\v sar and P. \v Semrl,  \textit{ Linear maps preserving the spectral radius}, \emph{ J. Funct. Anal.},  \textbf{142}  (1996),  no. 2, 360-368.

\bibitem{cui} J. Cui and J.C. Hou, \textit{Additive maps on standard operator algebras preserving parts of the spectrum}, J. Math. Anal. Appl. \textbf{282} (2003), no. 1, 266-278.

\bibitem{ajda} A. Fo\v sner and P. \v Semrl, \textit{Additive maps on matrix algebras preserving invertibility or singularity},  Acta Math. Sin. (Engl. Ser.) \textbf{21} (2005), no. 4, 681-684.

\bibitem{GM} W. T. Gowers and B. Maurey, \textit{The unconditional basic sequence problem}, J. Amer. Math. Soc.  \textbf{6} (1993),   851--874.

\bibitem{Her56} I.N. Herstein, \textit{Jordan homomorphisms}, Trans. Amer. Math. Soc., \textbf{81} (1956) 331-341.

\bibitem{hoi-Meiyan} J.C. Hou and M. Jiao,  \textit{Additive maps preserving Jordan zero-products on nest algebras}, Linear Algebra Appl. \textbf{429} (2008), no. 1, 190-208.

\bibitem{hou} C.J. Hou and L. Huang, \textit{Additive maps between standard operator algebras compressing certain spectral functions}, Acta Math. Sin. (Engl. Ser.) \textbf{24} (2008), no. 12, 2041-2048.

\bibitem{Cui-Hou} C.J. Hou and J. Cui, \textit{Additive maps on standard operator algebras preserving invertibilities or zero divisors}, \emph{Linear Algebra Appl.}, \textbf{359} (2003) 219-233.

\bibitem{JaSo86} A. Jafarian and A.R. Sourour,  \textit{ Spectrum preserving linear maps}, \emph{ J. Funct. Anal.}, \textbf{66} (1986)  255--261.

\bibitem{Ka51} R.V. Kadison, \textit{ Isometries of operator algebras}, Ann. Math., \textbf{54} (1951) 325-338.

\bibitem{kuzma} B. Kuzma, \textit{Additive spectrum compressors}, J. Math. Anal. Appl. \textbf{304} (2005), no. 1, 13-21. 

\bibitem{OS} M. Omladi\v c and P. \v Semrl, \textit{Additive mappings preserving operators of rank one}, Linear Algebra Appl., \textbf{182} (1993) 239-256.

\bibitem{OSP} M. Omladi\v c and P. \v Semrl, \textit{Spectrum-preserving additive maps}, \emph{Linear Algebra Appl.}, \textbf{153} (1991) 67-72.

\bibitem{Mb} M. Mbekhta, \textit{Linear maps preserving the minimum and surjectivity moduli of operators}, preprint.

\bibitem{Mb3} M. Mbekhta,  \textit{ Linear maps preserving the generalized spectrum}, \emph{Extracta Math.}, \textbf{22} (2007) 45-54.

\bibitem{Mu} V. Müller,  \textit{Spectral theory of linear operators and spectral systems in Banach algebras. Operator Theory: Advances and Applications} , 139. Birkhäuser Verlag, Basel, 2003.

\bibitem{skhiri} H. Skhiri, \textit{Reduced minimum modulus preserving in Banach space}, Integral equations and Operator Theory,  \textbf{62} (2008), 137-148.

\bibitem{zhao-hou} L. Zhao and J. Hou, \textit{Jordan zero-product preserving additive maps on operator algebras}, J. Math. Anal. Appl. \textbf{314} (2006), no. 2, 689-700. 

\end{thebibliography}
\end{document}